\newtheorem{thm}{Theorem}
\newcounter{example}
\newenvironment{example}{\addtocounter{example}{1}\noindent\textbf{Example \theexample.} }{\par\bigskip}
\newcounter{remark}
\newenvironment{remark}{\addtocounter{remark}{1}\noindent\textbf{Remark  \theremark.} }{\par\bigskip}
\def\nnorm{\mbox{}\mbox{$\not\hspace{-0.3em}\scn$}}
\title[Inconsistent GMLE]{A mixture of a normal distribution with random mean and variance\par Examples of inconsistency of maximum likelihood  estimates}
\author*[1]{\fnm{Ya\hspace{-0.1em}'\hspace{-0.1em}acov} \sur{Ritov}}\email{yritov@umich.edu}
\affil*[1]{\orgdiv{Department of Statistics}, \orgname{University of Michigan}, \orgaddress{
\city{Ann Arbor}, 
\state{Michigan}, \country{USA}}}
\abstract{
    We consider the estimation of the mixing distribution of a normal distribution where both the shift and scale are unobserved random variables. We argue that in general, the model is not identifiable. We give an elegant non-constructive proof that the model is identifiable if the shift parameter is bounded by a known value. However, we argue that the generalized maximum likelihood estimator is inconsistent even if the shift parameter is bounded and the shift and scale parameters are independent. The mixing distribution, however,  is identifiable if we have more than one observations per any realization of the latent shift and scale.
}
\keywords
{Empirical Bayes,GMLE,Normal mixture}
\begin{document}
\maketitle

\section{Introduction}
\cite{BahadurMLE1958} considered nonparametric examples in which the MLE (maximum likelihood estimator) exists and but is not consistent. His the first example is simple. Consider the class of all bounded (e.g., by 2) densities on the interval: $$\scf=\{F: \text{ with a continuous density } f(x)\le 2\ind(|x|<1)\},$$ and \iid observations $Y_1,\dots,Y_n$ from $F$. Here, $\ind(\cdot)$ is the indicator function. The MLE is simple but not unique: any probability  density function $\hat f$ such that $\hat f(Y_i)\equiv 2$. This example is not too interesting, and there is a good version of the MLE. Let $\bbf_n$ be the empirical distribution function and $\hat F$ be any distribution with  continuous density $\hat f$ satisfying $\|\hat F-\bbf_n\|_\en<n^{-1/2}$ and $\hat f(Y_i)=2$. The distribuiton $\hat F$ is a weakly consistent MLE. 

His second estimator is more interesting, but much more complicated. Bahadur presents an a countable number of distributions, $\scp=\{P_\en,P_1,P_2,\dots\}$, supported on the positive integers suck that $\lim_{k\to\en}P_k=P_\en$,  the MLE, $\hat P=\hat P_{\hat k_n}$, exists, is unique, and $\hat k_n\cas \en $.

In general, we consider in this paper non-dominated families and thus we need to use the generalized maximum likelihood estimator (GMLE) as defined in \cite{kieferWolfowitz1956}: if $Y_1,\dots,Y_n$ are \iid sample from $P\in\scp$, then $\hat P\in\scp$ is a GMLE if 
 \eqsplit{
    \forall \ti P\in\scp:\quad \summ i1n \log \frac{d\hat P}{d(\hat P+\ti P)}(Y_i)  \ge \summ i1n \log \frac{d\ti P}{d(\hat P+\ti P)}(Y_i).
  }

In this short note, whose subtitle is \cite{BahadurMLE1958}'s title,  we present another example of inconsistent MLE. The example is the natural generalization of a problem consider recently in the context of empirical Bayes, cf. \cite{SahaGunt20,JiangZhang09,GreenshteinRitov22,ritov2024}, in which we observe a mixture of distribution convoluted with a normal distribution. If the variance of the normal distribution is known, the MLE exists, and is relatively easy to compute (see the discussion and references in \cite{ritov2024}). In our problem, we assume that both the location and the scale of the normal distribution are unknown random variables. We show that, depending on the assumptions on the support of the location and shift parameters the problem may be identifiable or unidentifiable. We show that if the location parameter is unlimited, then the model is unidentifiable even if we try to restrict the definition to a minimum class. However, we show using a surprising technique that the model is identifiable if the shift parameter is bounded. This raised the question whether the GMLE is consistent in this identifiable situation. We argue that the answer is negative. It is well defined but converges to a wrong distribution. Assuming that the location and shift are independent random variables does not qualitatively these statements. The situation is interesting since the GMLE is inconsistence if we use a valid assumption while it is consistent if this true assumption is ignored. A similar situation happens with the GMLE for star shape distributions.

\section{On the identifiability of the normal mixture model}

The normal error measurement model is probably the most basic model of statistical science. We observe an \iid sample $Y_1,\dots,Y_n$ from a random variable  $Y$, where $Y$ is a random measurement of a value $X$. Unlike most standard models, we consider $Y_i=X_i+S_i\eps_i$, where $Y_i$ is observed, but $(X_i,S_i,\eps_i)$, $i=1,2,\dots,n$ are unobserved \iid sample, $\eps_i\dist \scn(0,1)$ is independent of $(X_i,S_i)\dist\Pi$, and $\Pi$ is unknown. 
Thus,
 \eqsplit{
    Y\dist F(y;\Pi) &\triangleq \Pi\circ\Phi
    \\
    &\triangleq \int\Phi\bigl(\frac{y-x}{s}\bigr) d\Pi(x,s),
}    
where $\Phi$ is the standard normal distribution.

Let 
\eqsplit{
    &\scf(\sci,\scs) =\{\Pi\circ\Phi, \Pi\in\scp(\sci,\scs) \},
}
where $\scp(\sci,\scs)=\{\Pi:\;\supp\Pi\subseteq\sci\times\scs\}$, $\supp\Pi$ is the support of $\Pi$, $\scs\subseteq\R$, and $\scs\subseteq[0,\en)$. Let $\scf^*(\sci,\scs)$ and $\scp^*(\sci,\scs)$ be defined similarly but with the restriction $\Pi(X\mid S)$ is symmetric around 0.

We can consider three different statistical problems with this model in the background:
\begin{enumerate}
  \item Estimate $F$ using the fact that $F\in\scf(\sci,\scs)$ to improve over the standard and simple empirical distribution of the sample.
  \item Estimating $\Pi$ (i.e., deconvolution of $F$).
  \item Find the empirical Bayes estimator:
  \eqsplit{
        D(y)&= E(X\mid Y=y)=\frac{\int x\varphi\Bigl(\frac{y-x}{s}\Bigr)d\Pi(x,s)} {\int \varphi\Bigl(\frac{y-x}{s}\Bigr)d\Pi(x,s)},
        }
  where $\varphi$ is the standard normal density.
\end{enumerate}

These three statistical problems presents three different challenges for the GMLE. The GMLE of $F$, the GMLE of $\Pi$, and the direct GMLE of $D(\cdot)$, respectively. The GMLE as a procedure, although the same, may succeed in one task and fail in another.  

Clearly, every cumulative distribution function (cdf) $F$ of a real random variable $Y$  is the weak limit of the convolution of $F$ with $N(0,\sig^2)$,  $\sig^2 \dec 0$. In other words, $F\in \scf(\sci,\scs)$, for any $\scs\ni0$ and $\sci$  is large enough (possibly the real lines $\R$) such that $\supp F\subset \sci$. Moreover, it is easy to see that for any $a$ such that $\scs-a\subseteq\R_+$ (where $\R_-$ and $\R_+$ are the negative and positive reals, respectively),  $\scf(\R,\scs)\subseteq\scf(\R, \scs-a)$,   since for any $\sig\ge a$, $G*N(0,\sig^2)=(G*N(0,a^2))*N(0,\sig^2-a^2)$, where $*$ is the convolution operator.   However, this argument does not hold for $\scf(\sci,\scs)$ for  $\sci\ne\R$ since the support of $G*N(0,\sig^2)$ is $\R$.  Thus, we have an identifiability question to be resolved.

To obtain identifiability, we may consider a restricted parameter set. We say that a distribution $P$ does not have a normal component, $P\in\nnorm$, if it cannot be represented as a convolution of another distribution with a (proper) normal distribution. We then define
\eqsplit{
    \scp_{NN}(\sci,\scs) &= \{\Pi\in\scp(\sci,\scs),\; \Pi_X \in\nnorm\}
    \\
    \scp_{NCN}(\sci,\scs) &= \{\Pi\in\scp(\sci,\scs),\;  \forall s\in\scs:\;\Pi(X\mid S=s) \in\nnorm \}.
}
where $\Pi_X$ is the marginal distribution of the location.
Similarly, we define $\scf_{NN}(\sci,\scs)$ and $\scf_{NCN}(\sci,\scs)$

A simple test whether a distribution belongs to $\nnorm$ is to consider its Fourier transform. Thus, a distribution function $G$ with characteristic function $\ti g(\omega)$ is in $\nnorm$ if for no $\al>0$, $\ti g(\omega)\exp(\al\omega^2)$ is a characteristic function. One conclusion from this test is that if $G_1\in\nnorm$ and $G_2\not\in\nnorm$ then $\gamma G_1+(1-\gamma)G_2\in\nnorm$.  Using this test we are going to argue that the main tool to ensure identifiability is restricting the support of $\Pi_X$.

\begin{thm}\label{th:ident}
    Suppose $F(Y;\Pi)\in \scf(\sci,[a,b])$, $a<b$.  Then,
    \begin{enumerate}
      \item  For any $\bar\Pi\in\scp_{NCN}(\R,[a,b])$ with $\supp \bar\Pi=\R\times[\bar a, \bar b]$, $a\le \bar a<\bar b\le b$, there is a $ \Pi\in\scp_{NCN}(\R,[a,b])$, $\bar\Pi\ne\Pi$, such that $F(Y;\Pi)=F(Y;\bar\Pi)$.
      \item However, if $F(Y;\Pi)\in \scf^*([-c,c],\scs)$, $c<\en$, then $\Pi$ is identifiable.
    \end{enumerate}
\end{thm}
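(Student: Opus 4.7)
The natural perturbation exploits the convolution identity $\varphi_s=\varphi_{s_1}\ast\varphi_{\sqrt{s^2-s_1^2}}$ for $s>s_1\ge 0$. I would pick $s_0\in(\bar a,\bar b]$ and $s_1\in[\bar a,s_0)$ and take small $s$-neighborhoods $A$ of $s_0$ and $B$ of $s_1$, both having positive $\bar\Pi_S$ mass (possible since $\supp\bar\Pi_S=[\bar a,\bar b]$). Let $\rho_1$ be the restriction of $\bar\Pi$ to $\R\times A$, and define $\rho_2$ as a measure on $\R\times B$ whose $S$-marginal is proportional to $\bar\Pi_S\vert_B$ and whose conditional at each $s'\in B$ is the normalized integral over $s\in A$ of $\bar\Pi_{X\mid S=s}\ast N(0,s^2-s'^2)$ weighted by $d\bar\Pi_S(s)$. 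A direct calculation using the convolution identity shows that $\rho_1$ and $\rho_2$ have the same $F$-contribution, so $\Pi=\bar\Pi-\epsilon\rho_1+\epsilon\rho_2$ satisfies $F_\Pi=F_{\bar\Pi}$ for small $\epsilon>0$, and $\Pi\ne\bar\Pi$ because the modifications act on disjoint $s$-regions. At source scales $s\in A$ the conditional is unchanged; at target scales $s'\in B$ the new conditional is a proper mixture of the original $\bar\Pi_{X\mid S=s'}\in\nnorm$ with the added (possibly non-$\nnorm$) conditional, which by the mixing lemma stated in the paper remains in $\nnorm$.

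\textbf{Part 2.} Suppose $\Pi,\Pi'\in\scp^*([-c,c],\scs)$ give the same $F$, and let $\Delta=\Pi-\Pi'$, a signed measure symmetric in $x$. Fourier transforming and using symmetry,
\[
\psi_\Delta(\omega)=\int\cos(\omega x)\,e^{-\omega^2 s^2/2}\,d\Delta(x,s)=0,\qquad\omega\in\R.
\]
The crucial fact I would leverage is that $|X|\le c$ makes the conditional characteristic function $\phi(\omega\mid s)=\int\cos(\omega x)\,d\Pi(x\mid S{=}s)$ an entire function of $\omega$ of exponential type at most $c$ (Paley--Wiener): bounded by $1$ on $\R$ and growing at most like $e^{c|\operatorname{Im}\omega|}$ in $\C$. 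The Gaussian factor $e^{-\omega^2 s^2/2}$ is of order $2$. This growth mismatch is precisely what fails in Part 1, where unbounded $X$ allows the $X$-factor to mimic a Gaussian.

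The concrete plan is to study $\psi_\Delta$ along the imaginary axis. Setting $\omega=it$,
\[
\psi_\Delta(it)=\int\cosh(tx)\,e^{t^2 s^2/2}\,d\Delta(x,s)=0,\qquad t\in\R,
\]
with $\cosh(tx)\le e^{ct}$ and $e^{t^2 s^2/2}$ dominating for large $t$. The leading large-$t$ asymptotic of $\psi_Y(it)$ is governed by $s$-values near $b=\sup\supp\Pi_S$; equating the leading terms between $\Pi$ and $\Pi'$ forces the top scale $b$, the mass $\Pi_S(\{b\})$, and the conditional $\Pi_{X\mid S=b}$ to agree across the two measures. Subtracting this top layer reduces to an analogous problem on a strictly smaller top scale, and iteration gives $\Delta=0$.

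The main obstacle is rigorizing the peeling when $\scs$ is a continuum without a discrete top scale; a refined Laplace-type asymptotic and careful treatment of atoms versus densities near the top are required. The elegant non-constructive route alluded to in the abstract likely bypasses explicit peeling by a density or duality argument on a suitable function space: for instance, showing that the linear span of $\{(x,s)\mapsto\cos(\omega x)\,e^{-\omega^2 s^2/2}:\omega\in\R\}$ is separating on symmetric signed measures on $[-c,c]\times\scs$, using the Paley--Wiener bound on $\phi(\cdot\mid s)$ together with a Stone--Weierstrass or Hahn--Banach style argument.
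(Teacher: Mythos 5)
Your Part 1 is correct and is in essence the paper's own argument: both rest on the identity $N(x,s'^2)=N(x,s^2)*N(0,s'^2-s^2)$ for $s<s'$, i.e., mass at a larger scale $s'$ can be relocated to a smaller scale $s$ after convolving the conditional law of $X$ with $N(0,s'^2-s^2)$, and both restore the no-conditional-normal-component property by noting that mixing the unchanged original conditional (which has no normal component) with the relocated, convolved one leaves the mixture without a normal component. The only real difference is that you perturb locally ($-\epsilon\rho_1+\epsilon\rho_2$ between two small scale windows), whereas the paper folds the entire upper half of the $S$-support onto the lower half in one step via $\delta(s)=\bigl(s+\frac12(\bar a+\bar b)\bigr)^2-s^2$. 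Your version needs the small bookkeeping that $\rho_2$ carries exactly the mass removed with $\rho_1$, but that is routine.

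Part 2 is where there is a genuine gap, and you have named it yourself: the peeling-from-the-top-scale argument via the asymptotics of $\psi_\Delta(it)$ is not carried out, and it genuinely breaks down when $\Pi_S$ has no atom at its top scale, because the subexponential corrections coming from the conditional law of $X$ (through $\cosh(tx)$, $|x|\le c$) and those coming from the shape of $\Pi_S$ near its supremum enter at comparable orders and cannot be separated by a crude Laplace expansion. The paper's mechanism is different and sidesteps this. Taking WLOG a symmetric conditional density on $[-c,c]$ and expanding it in a cosine series, the conditional characteristic function becomes $\tilde\pi^i(\omega\mid s)=\sum_k a_k^i(s)\,\frac{\sin(\omega c)}{\omega-k\pi/c}$, so equality of the two characteristic functions of $Y$ reads $\sum_k A_k(\omega)/(\omega-k\pi/c)\equiv 0$ with $A_k(\omega)=\int a_k^2(s)e^{-s^2\omega^2/2}d\Pi^2_S(s)-\int a_k^1(s)e^{-s^2\omega^2/2}d\Pi^1_S(s)$. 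Since the $A_k$ are entire while each summand has a pole at a distinct point $k\pi/c$, every $A_k$ must vanish identically. The $k=0$ equation (where $a_0^i$ is constant) says the two $S$-marginals have the same Gaussian--Laplace transform, hence $\Pi^1_S=\Pi^2_S$ by injectivity of the Laplace transform; the remaining $k$ then force $a_k^1=a_k^2$ and hence equality of the conditionals. This is the concrete device your "separating family / duality" paragraph gestures at but does not supply: the boundedness of $X$ is exploited not through a Paley--Wiener growth bound on the imaginary axis but through the discreteness of the Fourier spectrum on $[-c,c]$, which decouples the identifiability question into one Laplace-injectivity statement per Fourier mode.
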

\begin{proof}

Suppose $\supp \bar\Pi=\R\times [\bar a,\bar b]$ with $a\le \bar a< \bar b\le b$. We construct $\Pi$ by ``wraping'' $\bar\Pi$ around the middle of its $S$ support:
\eqsplit{
    \Pi(x,s)&=\Bigl(\bar\Pi(x,s) + \bar \Pi\bigl(x,s+\frac12(\bar a+\bar b)\bigr)*N(0,\del(s)\bigr)\Bigr)
    \ind\bigl(s\in(\bar a,\frac12(\bar a+\bar b)\bigr),
    \\
    \text{where }\del(s)&=s(\bar a+\bar b)+\frac14(\bar a+\bar b)^2.
     }
By the test suggested above, $\Pi\in\scp_{NCN}(\R,[a,b])$ since its first component is with no Gaussian component, but $F(Y;\Pi)=F(Y;\bar\Pi)$.

We prove now the second part of the theorem. Suppose $\Pi^1\circ\Phi=\Pi^2\circ\Phi$, $\Pi^1\ne \Pi^2$. We take the $\Pi^i$s to be the cdf's of $(X,S)$. Let $\ti \pi(\omega\mid s)=\E\Bigl(e^{j\omega X}\mid S=s\Bigr)$, where $j=\sqrt{-1}$.   Consider now the characteristic function of $f$:
 \eqsplit{
    \ti f^i (\omega) &= \int \ti \pi^i(\omega\mid s)e^{-\frac12s^2\omega^2}d\Pi^i_S(s),\quad i=1,2,
  }
 where $\Pi^i_S$ is the marginal distribution of $S$.

 However, $X$ is bounded. For simplicity (and WLOG) we assume it has a symmetric distribution. Hence, it has a Fourier representation:
 \eqsplit{
&\text{The conditional density: }  &  \pi^i(x\mid s)&=\summ k0\en (-1)^k a_k^i(s)\cos(\pi k x/c)\ind(|x|\le c)
    \\
&\text{Its Fourier transform: }   & \ti \pi^i(\omega\mid s) &= \summ k{-\en}\en a_k^i(s)\frac{\sin(\omega c)}{\omega-k\pi/c},\quad i=1,2.
  }
Thus, the characteristic function of $Y$:
 \eqsplit{
    \summ k{-\en}\en \frac{\sin(\omega c)}{\omega-k\pi/c} \int a_k^1(s) e^{-\frac12s^2\omega^2}d\Pi^1_S(s)
    &= \summ k{-\en}\en \frac{\sin(\omega c)}{\omega-k\pi/c} \int a_k^2(s) e^{-\frac12s^2\omega^2}d\Pi^2_S(s),
  }
 or
\eqsplit{
    \summ k{-\en}\en \frac{1}{\omega-k\pi/c} \int a_k^1(s) e^{-\frac12s^2\omega^2}d\Pi^1_S(s)
    &= \summ k{-\en}\en \frac{1}{\omega-k\pi/c} \int a_k^2(s) e^{-\frac12s^2\omega^2}d\Pi^2_S(s),
  }
Since we are dealing with analytic functions. Let
 \eqsplit{
    A_k(\omega)=\int a_k^2(s) e^{-\frac12s^2\omega^2}d\Pi^2_S(s)-\int a_k^1(s) e^{-\frac12s^2\omega^2}d\Pi^1_S(s).
  }
We obtained
 \eqsplit{
    \summ k{-\en}\en \frac{A_k(\omega)}{\omega-k\pi/c}=0.
  }
This implies that $A_k(\omega)=0$, $\omega\in\bbc$, $k\in\bbz$. Otherwise we would have $$A_m(w)=-(\omega-m\pi/c)\sum_{k\ne m} \frac{A_k(\omega)}{\omega-k\pi/c},$$
where the both the RHS and the LHS are analytic functions, but only the RHS has poles.
Spelling out,
\eqsplit[bas]{
     \int a_k^1(s) e^{-\frac12s^2\omega^2}d\Pi^1_S(s)
    &=  \int a_k^2(s) e^{-\frac12s^2\omega^2}d\Pi^2_S(s),\quad, \omega\in\bbr, k\in\bbz.
  }
But, $a_0^i(s)\equiv (2b)^{-1}$ (a distribution function has mass 1), hence \eqref{bas} with $k=0$:
\eqsplit{
     \int  e^{-\frac12s^2\omega^2}d\Pi^1_S(s)
    &=  \int  e^{-\frac12s^2\omega^2}d\Pi^2_S(s) ,
}
Since the Laplace transform is 1-1:
\eqsplit[k0]{
     \Pi^1_S&= \Pi^2_S.
 }
But \eqref{k0} together with \eqref{bas} for $k\ne 0$ implies that
\eqsplit{
     a_k^1(\cdot) &= a_k^2(\cdot),\quad k\in\bbz.
  }
  which concludes the proof.
  \end{proof}

\section{The generalized maximum likelihood Estimator}

A simple observation that was made is that every distribution belongs to the weak closure of $\scf(\R, [0,b])$, and hence if the support of $\Pi$ is unbounded, then the GMLE among all distributions is the same as the GMLE  among $\scf(\R,[0,b])$. That is, the empirical distribution is the GMLE, which is a good estimator of $F$, but an inconsistent estimator of $\Pi$.

This is not a surprise, because we have already proven that $\Pi$ is unidentifiable within this model. But Theorem \ref{th:ident} shows that the model $\scf([-c,c],[0,b])$, $c<\en$, is identifiable. This leaves the possibility that the GMLE of $\Pi$ is consistent under this assumption.   However, the proof of identifiability was based on a non-constructive argument, and it is not clear how to apply it for        an estimator. The analysis shows that in some sense the GMLE is actually worse  under $\scf(\sci,\scs)$ with bounded $\sci$ than it is  under $\scf(\R,\scs)$. Assuming  $\scf(\R,\sci)$ yields  inconsistent GMLE of $\Pi$ , but the GMLE of $F$ is consistent, while assuming a bounded $\sci$  leaves the GMLE  of $\Pi$  inconsistent but, also, the GMLE of observed $F$ itself is inconsistent. In other words, suppose $F\in\scf([-c,c],[0,b])$, $c<\en$. The GMLE of $F$ is better assuming nothing (i.e., the empirical distribution function) than holding to the restricted family. Here is formal asymptotic and non-asymptotic statements. The interesting phenomena is comparing Part \ref{part1} of Theorem \ref{th:ident} to its Part \ref{part2}. Suppose it is known that $F\in\scf(\R_-,\scs)$. Assuming that we would obtain inconsistent GMLE, however since $\scf(\R_-,\scs)\subset \scf(\R,\scs)$ we could assume less, only that $F\in\scf(\R,\scs)$ and then  the GMLE is consistent. 
\begin{thm}\label{Th:consis}
Suppose it is considered that $F(Y;\Pi)\in\scf(\sci,\scs)$ with $\scs\ni 0$. Then:
\begin{enumerate} 
\item If $\sci=\R$ then the GMLE of $F$ is $\bbf_n$, the empirical distribution function of $Y_1,\dots,Y_n$, and it is consistent.\label{part1}
\item If $\sci=\R_-$ and $\scs=\{0,1\}$, then . \label{part2}
 \eqsplit{
    \hat \Pi(A\times B) &= \bbf_n(A\cap\R_-)\ind(0\in B)+\bbf_n(Y>0)\ind\bigl((0,1)\in A\times B\bigr)
    \\
    &\cip F(A\cap\R_-)\ind(0\in B)+P(Y>0)\ind\bigl((0,1)\in A\times B\bigr)
    \\
    \hat F(y) &= \begin{cases}
                     \bbf_n(y)+\bbf_n(Y>0)\Phi(y), & y<0
                     \\
                     \bbf_n(Y>0)\Phi(y),&y\ge 0.
                   \end{cases}
    \\
    &\cip  \begin{cases}
                     F(y)+P(Y>0)\Phi(y), & y<0
                     \\
                     P(Y>0)\Phi(y),&y\ge 0.
                   \end{cases}
  }
Thus, neither the GMLE or $\Pi$ nor that of $F$ are consistent.  
\item Suppose $F\in\scf^*([-c,c],[0,b])$ for  $c>b$, $F(c)<1$, and  let $\eta$ be the solution of $\eta=ce^{-c(c-\eta)/b^2}$.  Then:
 \eqsplit{
    \hat \Pi(X\in A) &= 
                          \frac12\bbf_n (Y\in A)+\frac12\bbf_n (Y\in -A),\quad \forall A\subset (-c+\eta,c-\eta) 
    \\
    \hat \Pi(X\in (c-\eta,c)) &= \frac12\bbf_n (Y\in (c-\eta,c))+\frac12\bbf_n (Y\in -(-c,-c+\eta))
    \\
    &\hspace{3em}+\frac12\bbf_n(|Y|>c).
  }

\end{enumerate}
\end{thm}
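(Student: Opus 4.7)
The plan is to reduce all three parts to a common principle: under the Kiefer--Wolfowitz criterion, if any competing $\ti P\in\scp$ can place an atom at an observation $Y_i$ while $\hat P$ does not, then $d\hat P/d(\hat P+\ti P)(Y_i)=0$, and the log--likelihood ratio is $-\infty$ there, so $\hat P$ cannot be the GMLE. Consequently, whenever $(Y_i,0)\in\sci\times\scs$ is feasible, the GMLE $\hat\Pi$ must place an atom at $(Y_i,0)$, and the remaining optimization is a finite--dimensional concave program in the atom masses and the residual boundary--shelf masses. Part~1 is immediate: $\sci=\R$ and $0\in\scs$ make every observation feasible, so $\hat\Pi=\tfrac{1}{n}\sum_i\delta_{(Y_i,0)}$, $\hat F=\bbf_n$, and Glivenko--Cantelli gives consistency.

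For Part~2 I will parametrize $\hat\Pi=\sum_{Y_i<0}\beta_i\delta_{(Y_i,0)}+\alpha\delta_{(x^\star,1)}$ with $x^\star\le 0$ and $\sum\beta_i+\alpha=1$. The density at each $Y_i>0$ is $\alpha\varphi(Y_i-x^\star)$, which is maximized over $x^\star\le 0$ at $x^\star=0$, so the $S=1$ mass collapses to the right endpoint of $\R_-$. The generalized log--likelihood $\sum_{Y_i<0}\log\beta_i+\sum_{Y_i>0}\log(\alpha\varphi(Y_i))$ is strictly concave; one KKT step on the equality constraint yields $\beta_i\equiv 1/n$ and $\alpha=\bbf_n(Y>0)$, which reproduces the displayed $\hat\Pi$. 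The in--probability limits follow by Glivenko--Cantelli applied to $\bbf_n$, and the failure of the limit of $\hat F$ to coincide with the true $F$ is then transparent.

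For Part~3 I would run the same argument under the symmetry of $\scp^*$. Each atom at $(Y_j,0)$ is paired with one at $(-Y_j,0)$, and the $|Y_j|>c$ observations must be absorbed by a shelf that, by the single--Gaussian optimization ($\partial_s[\varphi((y-x)/s)/s]=0$ is solved at $s=|y-x|$, coupled with pushing $x$ to the nearest boundary), concentrates at $(\pm c,b)$. The candidate is
$\hat\Pi=\sum_{|Y_j|\le c}m_j\bigl(\delta_{(Y_j,0)}+\delta_{(-Y_j,0)}\bigr)+\alpha\bigl(\delta_{(c,b)}+\delta_{(-c,b)}\bigr)$
with $\sum m_j+\alpha=\tfrac12$. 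The generalized log--likelihood splits into an atomic part $\sum_{|Y_j|\le c}\log m_j$ and a shelf part $\sum_{|Y_j|>c}\log\bigl(\alpha[\varphi((Y_j-c)/b)+\varphi((Y_j+c)/b)]/b\bigr)$, and joint KKT against the mass constraint fixes both sets of masses; collecting atom and shelf contributions on each region of $[-c,c]$ yields the displayed formulas for $\hat\Pi(X\in\cdot)$.

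The main obstacle is identifying the threshold $\eta$. For $|Y_j|$ near $c$ the shelf at $(c,b)$ already contributes a non--negligible continuous density $\tfrac{\alpha}{b}\varphi((Y_j-c)/b)$ at $Y_j$ itself, and the shadow price of reassigning the last infinitesimal of $m_j$ into the shelf must balance $1/m_j$ at the still--active atoms, the shelf's coverage of the $|Y_k|>c$ points, and the symmetric doubling of paired mass. Writing the first--order condition for $m_j\to 0^+$ at $|Y_j|\uparrow c-\eta$, substituting the global mass balance $\alpha=\tfrac12\bbf_n(|Y|>c)$, and retaining only the dominant tail $\varphi((Y_j-c)/b)$ in the shelf density reduces the boundary condition to the transcendental equation $\eta=ce^{-c(c-\eta)/b^2}$; the hypothesis $c>b$ is precisely what guarantees a unique solution in $(0,c)$. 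The convergence statements in Parts~2 and~3 then follow by Glivenko--Cantelli applied termwise to the empirical masses on the right--hand sides, and the gap between these limits and the true $\Pi$ is the asserted inconsistency.
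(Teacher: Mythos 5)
Parts 1 and 2 of your argument coincide with the paper's proof: atoms at $(Y_i,0)$ wherever feasible, the $S=1$ mass pushed to $x^\star=0$ because $\varphi(Y_i-x)$ is increasing in $x\le 0$ when $Y_i>0$, and the mass-allocation step giving $1/n$ per negative observation and $\bbf_n(Y>0)$ for the normal component. That part is sound and is essentially the paper's route.

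Part 3 contains a genuine gap, in two places. First, you assert that the outer component concentrates at $(\pm c,b)$ by a per-observation optimization ($s=|y-x|$, push $x$ to the boundary). That is neither what the theorem claims nor correct: the statement puts the outer mass $\frac12\bbf_n(|Y|>c)$ somewhere inside the open interval $(c-\eta,c)$, not at the endpoint, and the outer component must fit \emph{all} observations with $|Y_i|>c$ simultaneously, so a single-Gaussian, single-observation optimization (which moreover can give $|Y_i-c|>b$) is not the relevant calculation. Second, and more importantly, the equation $\eta=ce^{-c(c-\eta)/b^2}$ is asserted rather than derived, and the mechanism you invoke --- a shadow-price condition in the interior atom masses $m_j$ as $m_j\to 0^+$ --- cannot produce it: the interior atoms and the outer component live on mutually singular pieces of the dominating measure (counting measure at the observations versus Lebesgue), so there is no marginal trade-off between them of the kind you describe. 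The paper's actual argument is a self-consistent bound on the location $x_j$ of any support point of the outer component: (i) stationarity of $\ell_O$ in $x_j$ expresses $x_j$ as a weighted average built from the $Y_i$ with $|Y_i|>c$; (ii) the second-order condition at a maximizer forces $x_j>c-b$, since otherwise every ratio $(Y_i\mp x_j)^2/s_j^2$ exceeds $1$ and the second derivative would be positive; (iii) feeding a hypothesis $x_j>c-\eta$ back into the stationarity ratio, using $\alpha_{ji}/\tilde\alpha_{ji}=e^{Y_ix_j/s_j^2}$ and $|Y_i|>c$, yields the improved bound $x_j\ge c-ce^{-c(c-\eta)/b^2}$, whose fixed point is exactly the stated $\eta$. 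Without steps (ii)--(iii) you have no control over where the outer mass sits, and that localization is the entire content of Part 3.
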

\begin{proof}
    Assume $\sci=\R$. Any observation $Y_i$ can be explained completely by $\Pi$ having a point mass at $(Y_i,0)$. The GMLE among $\scf(\R,\scs)$ is the GMLE among all distributions on the line, namely $\bbf_n$. This proves the first part.
     
    For the second part note that any negative observation $Y_i$ can be explained completely by $\Pi$ having a point mass at $(Y_i,0)$. On the other hand, the positive observations contribute to the likelihood only through a density  of a normal distribution centered at a non-positive point. Thus, we consider the GMLE with respect to the Lebesgue measure plus the counting measure on the observations;
\eqsplit{
    \ell(\Pi;Y_1\dots,Y_n) &= \sum_{i:\,Y_i\le 0} \log p_i +  \sum_{i: Y_i>0}\log\sum_j q_j\varphi(Y_i-x_j).
    \\
    &p_j,q_j\ge 0,\;\;\sum p_j+\sum q_j=1,\;\;x_j\le 0,\;.
  }
   If $Y_i>0$, $\varphi(Y_i-x_j)$ is maximized by having $x_j=0$ (which is the closest possible value of $X$). Then the maximization is done by have $p_j=1/n$ and $q_1=\bbf_n(Y>0)$. This proves the second part. 

  We turn to the third part. For simplicity we consider  $X$ which has a strictly positive density on $(-c,c)$.
The GMLE is permitted to have points of mass inside the $[-c,c]$ interval, and we write the likelihood to be with respect to the Lebesgue measure plus the counting measure on the observations within the $(-c,c)$ interval and their mirror image (since the distribution is symmetric):
  \eqsplit{
    \ell(\Pi;Y_1\dots,Y_n) &= \sum_{i:\,|Y_i|<c} \log p_i +  \sum_{i: |Y_i|>c}\log\sum_j \frac{p_j}{s_j}\Bigl(\varphi\bigl(\frac{Y_i-x_j}{s_j}\bigr)+\varphi\bigl(\frac{Y_i+x_j}{s_j}\bigr)\Bigr)
    \\
    &p_j\ge 0,\;\;\sum p_j=\frac12,\;\; 0\le x_j\le c,\;\; 0<s_j\le b.
  }
(The sum of the probabilities is only $1/2$ since any point mass of the mixing distribution has its mirror image.) Any $N(x_j,s_j^2)$, $x_j\ne Y_i$, $s^2_j> 0$, does not contribute to the likelihood with respect to this measure at a point $Y_i\in(-c,c)$. Any point mass at $x_j$, does not contribute to the likelihood at any observation with $Y_i\ne x_j$.

The true distribution, $F(Y;\Pi)$, is symmetric and supported on all of  $\R$. Clearly, $\hat\Pi$ has a point mass of $1/2n$ at $(\pm Y_i,0)$ for any $|Y_i|<c$. This part of $\hat\Pi$ explains well the observations within the interval $(-c,c)$, but only them. With probability greater than 0, there are, however, observations outside this interval, which are more or less with comparable number at both sides of the interval. The GMLE component due to the points outside  the interval is discrete. Each $x_j$ belonging to the     support of this component is maximizing
  \eqsplit{
    \ell_O&= \sum_{i: |Y_i|>c}\log\sum_j \frac{p_j}{s_j}\biggl(\varphi\Bigl(\frac{Y_i-x_j}{s_j}\Bigr)+\varphi\Bigl(\frac{Y_i+x_j}{s_j}\Bigr)\biggr).
  }
Now, 
 \eqsplit{
    \frac{\partial \ell_O}{\partial x_j} &=   \sum_{i: |Y_i|>c} \frac{\frac{p_j}{s_j^3}\Bigl((Y_i-x_j)\varphi\bigl(\frac{Y_i-x_j}{s_j}\bigr)-(Y_i+x_j)\varphi\bigl(\frac{Y_i+x_j}{s_j}\bigr)\Bigr)} {\sum_k\frac{p_k}{s_k}\Bigl(\varphi\bigl(\frac{Y_i-x_k}{s_k}\bigr)+\varphi\bigl(\frac{Y_i+x_k}{s_k}\bigr)\Bigr)}
    \\
    &= \sum_{i: |Y_i|>c} \bigl(\al_{ji}(Y_i-x_j)-\ti\al_{ji}(Y_i+x_j)\bigr),\quad\text{say}
    \\
    &= \sum_{i: |Y_i|>c} (\al_{ji}-\ti\al_{ji})Y_i - x_j\sum_{i: |Y_i|>c} (\al_{ji}+\ti\al_{ji}).
  }
Thus, a zero of the derivative can be at a point that satisfies  
 \eqsplit{
  x_j &= \frac{\sum_{i: |Y_i|>c} (\al_{ji}-\ti\al_{ji})Y_i}{\sum_{i: |Y_i|>c} (\al_{ji}+\ti\al_{ji})}. 
  }
(This result is the basis of the EM algorithm for such a mixture.)

 Considering the second derivative at a maximizer $x_j$
 \eqsplit{
    0\ge \frac{\partial^2 \ell_O}{\partial x_j^2} &=   \sum_{i: |Y_i|>c}\Bigl(\al_{ji}\frac{(Y_i-x_j)^2}{s_j^2}+\ti\al_{ji}\frac{(Y_i+x_j)^2}{s_j^2}\Bigr) -\sum_{i: |Y_i|>c} (\al_{ji}+\ti\al_{ji}).
  }
which implies that $x_j>c-b$. Otherwise all of the fractures would be greater than 1 and the expression on the right hands side would be positive. More generally, let $x_j>c-\eta$. Now, $\al_{ji}/\ti\al_{ji}=e^{Y_ix_j/s_j^2}$, Thus,
 \eqsplit{
  x_j &= \frac{\sum_{i: |Y_i|>c} |\al_{ji}-\ti\al_{ji}||Y_i|}{\sum_{i: |Y_i|>c} (\al_{ji}+\ti\al_{ji})}
  \\
  &\ge  \frac{\sum_{i: |Y_i|>c} |\al_{ji}+\ti\al_{ji}|(1-e^{-c(c-\eta)/b^2})|Y_i|}{\sum_{i: |Y_i|>c} |\al_{ji}+\ti\al_{ji}|}
  \\
  &\ge c- ce^{-c(c-\eta)/b^2}.
  }
Thus, we have established that $x_j\ge c-\eta$, where $\eta$ is the solution of $\eta=ce^{-c(c-\eta)/b^2}$.

\end{proof}

\section{Examples and remarks}
\begin{example}
  Suppose,  it is known that  $Y_1,\dots,Y_n$ are \iid from $F(Y;\Pi)\in\scf^*(0,1,z_{0.025})$, where $z_\al$ is the $1-\al$ quantile of the normal distribution. Let the true $\Pi$ be    a point mass at 0. We can compute $\eta=0.046$ and the GMLE of $\Pi$ converge to a normal distribution with its 0.05 tail wrapped into the intervals of 0.046 from the end points, very far from the true point mass.  
\end{example}

\begin{example}
  Let $n\to\en$ and $c_n\to \en$. Suppose it is known that $Y_{n1},\dots,Y_{nn}$ are \iid from $F_n\in\scf(0,b,c_n)$. Suppose under the truth, $P(S=1)=1$, while $X_{n1}$ has a uniform distribution on $(-c_n,-c_n+1)\cup(c_n-1,c_n)$. Let $\ti Y_{n1}=|Y_{n1}|-c_n$. Then $\ti Y_{n1}\cid G(y)\equiv\int_{-1}^0 \Phi(y-z)dz$. Similarly, let $\ti X_{n1}=|X_n|-c_n$.

By considering $\ti W_{n1}$ we can concentrate on the behavior at the two end points, which, since $c_n\to\en$ become independent. In that case  $\eta=\eta_n\to 0$ and the second component of the GMLE becomes asymptotically two  point mass.  The GMLE of $\Pi_{\ti X}$ is, asymptotically, $G(x)\ind(x<0)+\bigl(1-G(0)\bigr)\del_{0}$. 

It follows from what was discussed so far that $S$ conditional on $\ti X<0$ is asymptotically concentrated on 0. However, when $\ti X=0$ the  GMLE of the distribution of  $S$ is asymptotically found to maximize the Kullback-Leibler affinity between $G(y)\ind(y>0)$ and a mixture of normal distributions with 0 mean. I.e., 
 \eqsplit{
    \argmax_H\int_0^\en \log\Bigl(\int_0^b \frac{1}{s}e^{-y^2/2s^2}d H(s) \Bigr)dG(y).
  }
  
\begin{remark}
  The situation of $\sci=\R_-$ is just purifying the $\sci=[-c,c]$ when $c\to\en$. It disassociates what happens on outside the interval from the right from the observations on the left of the interval. 
\end{remark}

\begin{remark}
The analysis was based on the assumption of one observation per latent variable. In many cases we have more than one. For simplicity, let assume that we have unobserved $(X_1,S_1),\dots,(X_n,S_n)$ \iid from $\Pi$, and for each $i$ we observe $(Y_{i1},Y_{i2})$ \iid $\scn(X_i,S^2)$. In that case the density is 
 \eqsplit{
    f(y_1,y_2\mid \Pi) &= \int \frac{1}{2\pi \sig^2}e^{-\frac{1}{2\sig^2}(y_1-x)^2-\frac{1}{2\sig^2}(y_2-x)^2}d\Pi(x,\sig)  
    \\
    &\le \int \frac{1}{2\pi \sig^2}e^{-\frac{1}{4\sig^2}(y_1-y_2)^2}d\Pi(x,\sig)
    \\
    &\le \frac{2}{\pi(y_1-y_2)^2}.  
  }
Thus the observation are from a mixture of a bounded two parameters full rank exponential family and the GMLE (actually, MLE) exists, is unique and consistent.
\end{remark}
\end{example}

\begin{remark}
  The analysis so far was based on the assumption that $(X,S)$ may have any distribution on $\sci\times\scs$. Suppose we assume that $S$ and $X$ are independent, $\Pi(A,B)=\Pi_X(A)\Pi_S(B)$. This is equivalent to the assumption that the distribution of $Y$ is the convolution of $\Pi_X$ with $G(y)=\int_\scs s^{-1}\Phi(y/s)d\Pi_S(s)$. The distribution $G$ is unimodal and symmetric. However it is not strongly unimodal. 
  
  Qualitatively, this would make a little difference if $\scs=\R_-$ and $0\in\scs$ (or, more generally, in its closure), since $G$ may have a point mass at 0. Thus, the GMLE is still defined on the counting measure on the negative observations plus Lebesgue. Let $G$ have a point mass $q$ at 0 and Lebesgue density $(1-q)g$. The likelihood is  
   \eqsplit{
    \ell(q,g, (p_1,x_1),\dots, ; Y_1,\dots,Y_n) &= \sum_{Y_i<0}\log (p_iq) + \sum_{Y_i>0} \log\bigl(\sum_j(1-q)p_j g(Y_i-x_j)\bigr).
    \\
    x_j\le 0, \sum p_j=1.  
    }
As it was in the general case the mass at 0 of the $S$ distribution contributes nothing to the positive observations (since $Y_i>0\ge x_j$) and the continuous part does not contribute anything to the observations where the dominating measure has a point mass. Thus, $\hat q=N_i/n$ trivially, where $N_i$ is the number of negative observations. Since, $g$ is unimodal and $Y_i-x_j>0$, the second term is maximized by having $j=1$ and $x_1=0$, thus $g$ maximize: $\sum_{Y_i>0}\log g(Y_i)$. As a simple example, consider $\scs=\{0,1\}$, then the maximum likelihood estimator converges weakly,
 $$\hat F\cip F(0)F(y\mn 0)+\bigl(1-F(0)\bigr)\int_{-\en}^0 \Phi(y-x)dF(x)+\bigl(1-F(0)\bigr)\Phi(y),$$
 where $a\mn b$ is the minimum of $a$ and $b$. 
 
 For another example, suppose $\sci=\R_-$ and $\sci=[0,2]$, while the true distribution is $N(0,1)$. In that case, the limiting  $g$ would be the standard normal, since it would give a perfect fit to the $Y$ distribution on $\R_+$, and thus the limit would be  different from the true $F_0(y)=\Phi(y)$:
  \eqsplit{
    \hat F(y)\cip& \frac12 \Phi(y\mn 0)+\frac12\int_{-\en}^0 \Phi(y-x)\varphi(x)dx+\frac12\Phi(y)
    \\
    =&  \frac12 \Phi(y\mn 0) + \frac14 - \frac14\Phi^{2}(-y/\sqrt2)+\frac12\Phi(y)
   }
Where we used the fact that the derivative of the integral in the first expression satisfies:
  \eqsplit{
    \int_{-\en}^0 \varphi(y-x)\varphi(x)dx &= \frac{1}{2\pi}\int_{-\en}^0 e^{-(y-x)^2/2-x^2/2}dx
    \\
     &= \frac{1}{2\pi}\int_{-\en}^{-y/2} e^{-(y/2-t)^2/2-(y/2+t)^2/2}dt
     \\
     &= \frac{1}{2\pi}e^{-y^2/4}\int_{-\en}^{-y/2} e^{-t^2}dt
     \\
     &= \frac{1}{2\sqrt{2}\pi}e^{-y^2/4}\int_{-\en}^{-y/\sqrt{2}} e^{-u^2/2}du 
     \\
     &=  \frac{1}{4\pi}\frac{\partial}{\partial y}\Bigl(\int_{-\en}^{-y/\sqrt{2}} e^{-u^2/2}du\Bigr)^2 
     \\
     &= -\frac{1}{2} \frac{\partial}{\partial y}\Phi^2(-y/\sqrt{2}).
   }
   
\end{remark}

\begin{remark}
  Suppose $\scs=\R$ but $\sci=[a,b]$. The model is unidentifiable as stated in Theorem \ref{th:ident}. This is relevant to the deconvolution and empirical Bayes problems, but not to the estimation of the distribution of $Y$. The latter is, of course, identified and it can be estimated by the GMLE which is weakly consistent, whether or not $a=0$ or $a>0$.  It was not estimated consistently by the GMLE when $\scs=\R_-$ and $a=0$, see Theorem \ref{Th:consis}. The problem seems caused by the fact that the GMLE has non-zero density with respect to the counting measure on the observations. The GMLE will be consistent if we avoid this component. This will happen with a few variations. 
  
  For one, we may restrict attention to $0<a$. In that case, the density is bounded by $1/a$, the GMLE is simply MLE, and then argument similar to the consistency of the MLE for a fixed noise scale (cf. \cite{ritov2024} for references) would apply. 
  
  Another possible variation is keeping $\sci=\R_-$ and $\scs=[0,b]$, but  considering only the positive observations, by either truncation---sampling from $F(Y\mid Y>0)$, or censoring---observing $Y_i\ind(Y_i>0)$, $i=1,\dots,n$. The log-likelihood functions are
  \eqsplit{
    \ell_C &= N_-\log\sum_j p_j \Phi\bigl(-x_j/s_j\bigr)+\sum_{Y_i>0}\log\sum_j \frac{p_j}{s_j}\varphi\bigl(\frac{Y_i-x_j}{s_j}\bigr)
    \\
    \ell_T &= \sum_i \log\frac{\sum_j \frac{p_j}{s_j}\varphi\bigl(\frac{Y_i-x_j}{s_j}\bigr)}{\sum_j p_j \bigl(1-\Phi\bigl(-x_j/s_j\bigr)\bigr)}  ,
    }
 for the censored and truncated cases, respectively. It can be argued that the $x_j$s are bounded (for a given sample), and the $s_j$ are bounded away from 0. Therefore it is still a maximization of smooth bounded functions in a family which includes the truth.

\end{remark}

\bibliography{bahadur.bib}
\end{document}